\documentclass[10pt,fleqn]{amsart}

\usepackage{amsmath,amssymb,latexsym}
\usepackage[mathscr]{eucal}
\usepackage{accents}


\usepackage{cite}

\theoremstyle{plain}
\newtheorem{theorem}{Theorem}
\newtheorem{lemma}[theorem]{Lemma}

\numberwithin{equation}{section}

\raggedbottom

\def\mydate{\number\year-\ifnum\month<10{0}\fi\number\month-\ifnum\day<10{0}\fi\number\day}

\pagenumbering{arabic}



\newcommand{\dy}{\partial}
\newcommand{\ddt}[1]{\frac{\mathrm{d}{#1}}{\mathrm{d}{t}}}

\newcommand{\sfrac}[2]{{\textstyle\frac{#1}{#2}}}
\newcommand{\tssum}{{\textstyle\sum}}

\newcommand{\Zahl}{\mathbb{Z}}
\newcommand{\Real}{\mathbb{R}}

\newcommand{\ex}{\mathrm{e}}
\newcommand{\im}{\mathrm{i}}
\newcommand{\eps}{\varepsilon}
\newcommand{\vfi}{\varphi}
\newcommand{\vtt}{\vartheta}

\newcommand{\gb}{\nabla}
\newcommand{\sgb}{\nabla^\perp}

\newcommand{\sump}[1]{\mathop{\smash{\mathop{{\sum}_{#1}'}}{\vphantom\sum}}}


\newcommand{\Dom}{D}

\newcommand{\kpg}{\kappa_g}
\newcommand{\Proj}{{\sf P}}
\newcommand{\Zu}{\mathbb{Z}_+^2}

\newcommand{\tht}{\theta}
\newcommand{\ds}{\;\mathrm{d}s}
\newcommand{\dtau}{\;\mathrm{d}\tau}
\newcommand{\dsig}{\;\mathrm{d}\sigma}

\newcommand{\Unif}{\mathcal{U}}

\newcommand{\EV}{{\sf E}}
\newcommand{\Var}{{\sf var}}
\newcommand{\cov}{{\sf cov}}
\newcommand{\cis}{\mathrm{cis\,}}

\newcommand{\Pkk}{\Proj_{\kappa,2\kappa}}

\newcommand{\dr}{\;\mathrm{d}r}

\newcommand{\iPhi}{\hat\Phi}


\begin{document}

\title[Batchelor--Howells--Townsend spectrum]
{Tracer turbulence: the Batchelor--Howells--Townsend spectrum revisited}
\author{M. S. Jolly$^{1}$}
\address{$^1$Department of Mathematics\\
Indiana University\\ Bloomington, IN 47405}
\author{D. Wirosoetisno$^{2}$}
\address{$^2$Department of Mathematics Sciences\\
Durham University\\ Durham, United Kingdom\ \ DH1 3LE}
\email[M. S. Jolly]{msjolly@indiana.edu}
\email[D. Wirosoetisno]{djoko.wirosoetisno@durham.ac.uk}

\thanks{This work was supported in part by NSF grant number DMS-1818754, and the Leverhulme Trust grant VP1-2015-036.  The authors thank J.~Vanneste and O.~Hryniv for helpful discussions.}


\subjclass[2000]{35Q30, 76F02}
\keywords{passive tracers, random velocity, turbulence, Batchelor--Howells--Townsend spectrum}

\begin{abstract}
Given a velocity field $u(x,t)$, we consider the evolution of a passive tracer $\tht$ governed by $\dy_t\tht + u\cdot\gb\tht = \Delta\tht + g$ with time-independent source $g(x)$.
When $\|u\|$ is small, Batchelor, Howells and Townsend (1959, J.\ Fluid Mech.\ 5:134) predicted that the tracer spectrum scales as $|\tht_k|^2\propto|k|^{-4}|u_k|^2$.
In this paper, we prove that this scaling does indeed hold for large $|k|$, in a probabilistic sense, for random synthetic two-dimensional incompressible velocity fields $u(x,t)$ with given energy spectra.
We also propose an asymptotic correction factor to the BHT scaling arising from the time-dependence of $u$.
\end{abstract}

\maketitle


\section{Introduction and Setup}\label{s:intro}

We consider the evolution of a passive scalar $\tht(x,t)$ under
a prescribed velocity field $u(x,t)$ and source $g(x)$,
\begin{equation}\label{q:dtht}
   \dy_t\tht + u\cdot\gb\tht = \Delta\tht + g.
\end{equation}
For ``realistic'' $u$, arising from experiments or numerical simulations, the situation is quite complicated, with various regimes depending on $\|u\|$ and for different ranges of scales \cite{warhaft:00,holzer-siggia:94,shraiman-siggia:00}.
Confining ourselves in this paper to the zero Prandtl (or Schmidt) number limit, we assume that the energy spectrum (which is $\propto|u_k|^2|k|^{d-1}$) scales as $|k|^\beta$ for some $\beta<0$ for all sufficiently large $|k|$.
This leaves us with two asymptotic regimes, that of small $\|u\|$ and large $\|u\|$, relative to the unit diffusion coefficient.

When $\|u\|$ is large, the Batchelor--Corrsin--Obukhov theory \cite{batchelor:59,corrsin:51,obukhov:49} predicts that the tracer spectrum should scale as $|k|^{-(\beta+5)/2}$, for $|k|$ small enough that inertial effects dominate diffusion, giving $|k|^{-5/3}$ in 3d (with $\beta=-\frac53$) and $|k|^{-1}$ in 2d (with $\beta=-3$).
This regime is often called the inertial--convective range \cite[p.~367]{vallis:aofd}.
Using a random velocity field that is white noise in time, Kraichnan \cite{kraichnan:68,kimura-kraichnan:93} derived these tracer spectra using a closure scheme called Lagrangian history direct interaction approximation.  In \cite{jolly-dw:ttur} we assumed a tracer spectrum (based on the Batchelor--Corrsin--Obukhov  theory) in order to estimate the extent of the tracer cascade.

In the small $\|u\|$ case, or for sufficiently large $|k|$ for any $u$, also known as the inertial--diffusive range \cite[p.~369]{vallis:aofd}, Batchelor, Howells and Townsend \cite{batchelor-howells-townsend:59} predicted in 1959 that the tracer spectrum should scale as $|k|^{\beta-4}$.
In contrast to Kraichnan's argument, their theory implicitly assumes that the velocity does not vary much relative to some unspecified tracer relaxation time.
The resulting tracer spectrum is thus independent of the evolution of $u$, which some find unsatisfactory, particularly in two dimensions where a static velocity field gives rise to an integrable dynamical system and thus no ``turbulence''; for this and related issues, see, e.g., \cite{ngan-pierrehumbert:00}.

In this paper we revisit the latter (henceforth BHT) regime, where diffusion predominates over advection at all scales.
For conceptual simplicity (and to avoid unsettled questions over the correct NSE velocity spectra), we use a synthetic random velocity field whose power spectrum scales as $|k|^\beta$, in both the static and time varying cases, with the aim of proving rigorously under what conditions one may expect to recover the BHT spectrum.
We hope that a complete understanding of one regime of ``tracer turbulence'' will bring us a step closer to the full picture, which likely lies at the intersection of multiple regimes.

Mathematically, one can prove an upper bound for the tracer spectrum that obeys the BHT scaling albeit with a worse ``constant'', as done below in the proof of Lemma~\ref{t:thtn}.
The lower bound however, is a different story: it is quite easy to contrive velocities and tracer sources that, despite having the ``correct'' spectra, do not give rise to the BHT tracer spectrum.
To wit, let $g(x)$ be fixed and $u(x,t)=\sgb\psi(x,t)$ where $\sgb\psi\cdot\gb g=0$; e.g., one can take $\psi(\cdot,t)=F(\Delta^{-1}g;t)$ for any $F(\cdot;t)\in C^1$.
Then one can verify that $\theta(x)=-(\Delta^{-1}g)(x)$ is the solution of \eqref{q:dtht}, which is independent of $u(x,t)$ as long as the latter preserves level sets of $g$.
There are thus (infinitely) many possible spectra of $u$ compatible with a given spectrum of $\theta$.
To avoid such pathologies without having to enumerate them, we adopt a probabilistic approach, where one seeks to prove that the tracer spectrum satisfies the BHT scaling with probability approaching one (in a well-defined space) as $|k|\to\infty$.

Numerical simulations of \eqref{q:dtht} can be found, e.g., in \cite{chasnov-canuto-rogallo:88,chasnov:91,gotoh-watanabe-suzuki:11,yeung-sreenivasan:13,yeung-sreenivasan:14}, all done in three dimensions with the aim of comparing with experimental data.
Two-dimensional simulations have been done by \cite{holzer-siggia:94}, who employed a synthetic Ornstein--Uhlenbeck velocity, and \cite{chen-kraichnan:98}, who employed a synthetic pseudo-white-noise velocity to model the theory of \cite{kraichnan:68};
the mathematical result of this paper covers (and is consistent  with) the first of these studies, but does not formally cover the second study due to the time-dependence of the velocity (although the spectra again agree).
As is clear below, in this paper we calculate the mean but only bound the variance of the tracer spectrum, so our result says nothing about, e.g., possible non-Gaussian behaviour (to begin with, it is not clear if our synthetic velocity is Gaussian).

\medskip\hbox to\hsize{\qquad\hrulefill\qquad}\medskip

For simplicity, we take $x\in\Dom:=[0,2\pi]^2$ and assume periodic boundary conditions in both directions.
The advecting velocity $u$ is taken to be incompressible, $\gb\cdot u=0$, with further regularity assumptions to be stated below as needed.
With no loss of generality, we assume that for all $t$
\begin{equation}
   \int_\Dom u(x,t) \;\mathrm{d}x = 0
   \quad\text{and}\quad
   \int_\Dom \tht(x,t) \;\mathrm{d}x = 0.
\end{equation}

We construct the velocity by putting
\begin{equation}\label{q:psi}
   u = -\dy_y\psi\boldsymbol{e}_x + \dy_x\psi\boldsymbol{e}_y =: \sgb\psi
   \quad\text{where}\quad
   \psi(x,t) = U\sump{k}\,|k|^{\beta}\ex^{\im(\phi_k^{}(t) + k\cdot x)}
\end{equation}
and $U$ is a positive constant parameter.
Here and elsewhere, $\sump{k}:=\sum_{k\in\Zahl^2\backslash\{0\}}$ excluding $k=0$.
Since $\psi(x,t)\in\Real$, its Fourier coefficients $\psi_k(t)=U|k|^\beta\ex^{\im\phi_k(t)}$ must satisfy $\psi_{-k}(t)=\bar\psi_{k}(t)$, which implies
\begin{equation}\label{q:rphase}
   \phi_{-k}(t)=-\phi_k(t).
\end{equation}
Except for this constraint, the phases $\phi_k(t)=2\pi-\phi_{-k}(t)\in\Unif(0,2\pi)$ are assumed to be independent (w.r.t.\ $k$) random variables uniformly distributed in $[0,2\pi)$.
Defining the Fourier upper half-plane
\begin{equation}\label{q:zu}
   \Zu:=\{(m,n):m\in\Zahl,\; n\in\mathbb{N}\} \cup \{(m,0):m\in\mathbb{N}\},
\end{equation}
we can say that the phases $\phi_k(t)$ are independent random variables for all $k\in\Zu$.
Analogously, by a (spectral) half-plane we mean this upper half-plane rigidly rotated by some angle: $\Zahl^2\cap\{k\in\Real^2: k\cdot a > 0 \text{ or } k\wedge a = |k|\,|a|\}$ for some fixed $a\in\Real^2-\{0\}$.

It is clear that \eqref{q:psi} gives rise to an energy spectrum that scales (up to integer lattice effects) as $|k|^{2\beta+3}$.
Denoting spectral projection by
\begin{equation}
   (\Proj_{\kappa,\kappa'}\tht)(x,t) := \sum_{\kappa\le|k|<\kappa'} \tht_k(t)\ex^{\im k\cdot x}
\end{equation}
(and its vector equivalent for $u$), this energy spectrum implies
\begin{equation}\label{q:ustatic}
   |\Proj_{\kappa,2\kappa} u|_{L^2}^2 \simeq \frac{2^{2\beta+4}-1}{2\beta+4}\,|D|\,U^2\kappa^{2\beta+4}
\end{equation}
where by $f(\kappa)\simeq g(\kappa)$ we mean that $\lim_{\kappa\to\infty} f(\kappa)/g(\kappa)=1$.
In this paper ``$\simeq$'' arises either from lattice effect, when we approximate sums over subsets of $\Zahl^2$ by the corresponding integrals over subsets of $\Real^2$, or from dropping terms of (relative) order $\kpg/\kappa$.
An estimate of the error in $\simeq$ can be found in the Appendix.
Unlike for the solution of, say, the 2d Navier--Stokes equations, we take this spectrum as extending all the way to infinity, and (for $\beta=-3$) may be thought of as the zero-viscosity limit of the spectrum of the 2d NSE.

For the tracer source $g$, we take for conceptual and technical simplicity
\begin{equation}\label{q:gkg}
   g(x) = -\tssum_k\,\gamma(|k|)|k|^2\ex^{\im(\xi_k^{} + k\cdot x)},\\
\end{equation}
where $g$ is bandwidth-limited, i.e.\ $\gamma(\kappa)=0$ for $\kappa\ge\kpg$;
for $g(x)$ to have zero mean, we also impose $\gamma(0)=0$.
We denote $\gamma_k:=\gamma(|k|)$.
Like $\phi_k$, we take the phases $\xi_k$ to be random in $\Unif(0,2\pi)$ and independent modulo the constraint $\xi_{-k}=-\xi_k$ [cf.~\eqref{q:rphase}].

\section{Static Velocity}\label{s:vstat}

We first consider the conceptually and technically simpler case where the velocity $u$ is independent of time.
Here $\theta$ is the solution of the elliptic equation
\begin{equation}\label{q:statht}
   -\Delta\tht + u\cdot\gb\tht = g
\end{equation}
whose existence, uniqueness and smoothness are well established provided that $u$ is sufficiently controlled.
However, these standard results do not give us the spectral shape, for which we need to estimate each spectral dyad $|\Pkk\tht|$ from above and below.

To this end, we write the solution $\tht$ of \eqref{q:statht} as the limit of the iteration
\begin{align}
  &\tht^{(0)} = -\Delta^{-1}g\\
  &\tht^{(n+1)} = -\Delta^{-1}\bigl[g - u\cdot\gb\tht^{(n)}\bigr],\label{q:thsn}
\end{align}
which is convergent if $|u|_{L^\infty(D)}^{}$, or equivalently the parameter $U$, is sufficiently small.
The precise criterion follows from the time-dependent case [\eqref{q:it0}--\eqref{q:idr} below], so we will not treat it explicitly here.
It will be convenient to define
\begin{equation}\label{q:vfi0}
   \vfi^{(n)} = -\Delta\tht^{(n)} - g
\end{equation}
with $\vfi^{(1)}=u\cdot\nabla(\Delta^{-1}g)$.

Notwithstanding the pathological example in the introduction, this setup does give the BHT spectrum with probability approaching 1 in the limit $\kappa\to\infty$.
More precisely, denoting expected value by $\EV$ and variance by $\Var$, both over i.i.d.\ random phases $\phi_k$ and $\xi_k$, we have the following:

\begin{theorem}\label{t:static}
Let the source $g$ be given by \eqref{q:gkg},
the velocity $u$ be given by \eqref{q:psi} with time-independent $\phi_k$, $\beta<-2$ and $U$ sufficiently small for convergence of the iteration \eqref{q:thsn},
and $\theta$ be the solution of \eqref{q:statht}.
Then we can write $\theta=-\Delta^{-1}g + \vtt + \delta\theta$, where $\vtt$ has the following properties when averaged over the random phases $\phi$ of the velocity $u$ and $\xi$ of the source $g$,
\begin{align}
   &\EV(|\Pkk\vtt|_{L^2}^2) \simeq \pi\,\mathcal{G}_0(g)\frac{4^{\beta}-1}{2\beta}\,U^2\kappa^{2\beta}\quad\text{and}\label{q:EVvtht}\\
   &\Var(|\Pkk\vtt|_{L^2}^2) \lesssim \frac{\pi\,\mathcal{G}_1(g)}{2}\frac{4^{2\beta-1}-1}{4\beta-2}\,U^4\kappa^{4\beta-2},\label{q:Varvtht}
\end{align}
with $\mathcal{G}_0$ and $\mathcal{G}_1$ given in Lemma~\ref{t:vfi0} below.
The remainder $\delta\tht$ is bounded as
\begin{equation}\label{q:bddtht}
   |\Pkk\delta\tht|_{L^2}^2 \le c_0(\beta,\kpg)|\gb^{-1}g|_{L^\infty}^2U^3\kappa^{2\beta}.
\end{equation}
\end{theorem}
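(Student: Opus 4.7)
The plan is to split $\theta$ into $\theta^{(0)} + \vtt + \delta\theta$ using the first step of the iteration \eqref{q:thsn}, with $\theta^{(0)} = -\Delta^{-1}g$, $\vtt := \theta^{(1)} - \theta^{(0)} = -\Delta^{-1}\vfi^{(1)}$ the first-order correction, and $\delta\theta := \theta - \theta^{(1)} = \sum_{n\ge 2}(\theta^{(n)}-\theta^{(n-1)})$ the higher-order tail.  The BHT scaling will be carried entirely by $\vtt$; $\delta\theta$ will be shown to be smaller by one factor of $U$ in squared norm.

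To work with $\vtt$ explicitly, I use $u_j = \im j^\perp\psi_j$ with $\psi_j = U|j|^\beta\ex^{\im\phi_j}$, $(\Delta^{-1}g)_l = \gamma_l\ex^{\im\xi_l}$, and the incompressibility identity $(k-l)^\perp\cdot l = k^\perp\cdot l$, to obtain
\[
   \vtt_k = \frac{U}{|k|^2}\sump{l}\,(k^\perp\cdot l)\,|k-l|^\beta\gamma_l\,\ex^{\im(\phi_{k-l}+\xi_l)}.
\]
For the mean \eqref{q:EVvtht}, expanding $|\vtt_k|^2$ as a double sum over $l,l'$ and using independence of $\{\phi_j,\xi_l\}_{j,l\in\Zu}$ subject to the reality constraints $\phi_{-j} = -\phi_j$, $\xi_{-l} = -\xi_l$ forces the diagonal $l = l'$ as the only pairing with nonvanishing expectation (the off-diagonal candidate $l = -l'$ would further require $k-l = -(k-l')$, i.e.\ the excluded $k = 0$).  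The resulting single sum is simplified by the asymptotic $|k-l|^{2\beta}\simeq|k|^{2\beta}$ for $|l|\le\kpg\ll\kappa$ (with error bounded in the Appendix) together with the angular identity $\sum_l(k^\perp\cdot l)^2\gamma_l^2 = \tfrac12|k|^2\sum_l|l|^2\gamma_l^2$, valid because $\gamma_l$ depends only on $|l|$.  Summing over the annulus $\kappa\le|k|<2\kappa$ by comparison with the radial integral $2\pi\int_\kappa^{2\kappa}r^{2\beta-1}\,dr = \pi(4^\beta-1)\kappa^{2\beta}/\beta$ then yields \eqref{q:EVvtht} with $\mathcal{G}_0(g)$ a multiple of $\sum_l|l|^2\gamma_l^2$.

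For the variance \eqref{q:Varvtht}, one expands $|\Pkk\vtt|_{L^2}^4$ as a fourfold sum over $k_1,k_2$ in the annulus and indices $l_1,l_1',l_2,l_2'$ with eight random phases.  The ``disconnected'' Wick contractions $l_i = l_i'$ ($i = 1,2$) exactly reproduce $(\EV|\Pkk\vtt|_{L^2}^2)^2$ and cancel, leaving only ``connected'' contractions that impose pairings between phases labelled by $k_1$ and by $k_2$ (e.g.\ $l_1 = l_2$, $l_1' = l_2'$ with matching $\phi$-constraints $k_1 - l_1 = \pm(k_2 - l_2)$, etc.).  Each such constraint collapses one of the two annular $k$-sums to a linear condition in the $l$'s, costing two powers of $|k|$ relative to the square of the mean and producing the $\kappa^{4\beta-2}$ factor, with the constant absorbed into $\mathcal{G}_1(g)$.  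Carefully enumerating these contractions, including the parity-swapped variants arising from $\phi_{-j} = -\phi_j$ and $\xi_{-l} = -\xi_l$, is the most delicate bookkeeping and I expect it to be the principal technical obstacle.

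For the deterministic bound \eqref{q:bddtht}, define $\eta^{(n)} := \theta^{(n)} - \theta^{(n-1)}$, so that $\eta^{(1)} = \vtt$, $\delta\theta = \sum_{n\ge 2}\eta^{(n)}$, and $\eta^{(n+1)} = \Delta^{-1}(u\cdot\gb\eta^{(n)})$.  The pointwise Fourier bound $|\eta^{(n+1)}_k| \le |k|^{-2}\sump{l}|u_{k-l}|\,|l|\,|\eta^{(n)}_l|$, combined with the explicit scaling $|u_j| = U|j|^{\beta+1}$ and an envelope $|\vtt_l| \lesssim U|l|^{\beta-1}|\gb^{-1}g|_{L^\infty}$ obtained by inserting the analogous bound into the formula above for $\vtt_k$, reduces the estimation of $|\Pkk\eta^{(2)}|_{L^2}^2$ to a convolution sum on $\Zahl^2$ that is controlled by Young's inequality and (for $\kappa$ past a multiple of $\kpg$) is dominated by $U^3\kappa^{2\beta}$.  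The tail $\sum_{n\ge 3}\eta^{(n)}$ forms a geometric series under the smallness of $U$ implicit in \eqref{q:it0}--\eqref{q:idr} and is absorbed into the constant $c_0(\beta,\kpg)$.
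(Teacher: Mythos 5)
Your decomposition $\tht=-\Delta^{-1}g+\vtt+\delta\tht$ with $\vtt=-\Delta^{-1}\vfi^{(1)}$, and your treatment of the mean and variance of $|\Pkk\vtt|_{L^2}^2$, follow the paper's route (Lemma~\ref{t:vfi0}) essentially step for step: the diagonal pairing $l=l'$ as the only surviving contraction for the second moment, the replacement $|k-l|^{2\beta}\simeq|k|^{2\beta}$, the angular average and annulus-to-integral approximation for the $k$-sum, and, for the fourth moment, the cancellation of the disconnected contractions against $(\EV|\Pkk\vtt|_{L^2}^2)^2$ with the surviving connected contractions confined to $k_1=k_2$ or to an $O(\kpg^2)$-neighbourhood of that diagonal --- which is exactly what costs the factor $\kappa^{-2}$ relative to the squared mean. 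The enumeration you flag as the principal obstacle is carried out in Lemma~\ref{t:vfi0} and comes out as you anticipate, so this part of your plan is sound.

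The genuine gap is in the remainder bound \eqref{q:bddtht}. You propagate a mode-wise envelope only one step (to $\eta^{(2)}$) and then close the tail $\sum_{n\ge3}\eta^{(n)}$ by the geometric series coming from the global contraction \eqref{q:idr}. That contraction is in a global norm ($L^2_tH^1$, or its static analogue), so it yields only $|\sum_{n\ge3}\eta^{(n)}|_{L^2}\lesssim U^3$ with \emph{no decay in $\kappa$}; the resulting estimate $|\Pkk\delta\tht|_{L^2}^2\lesssim U^4\kappa^{2\beta}+U^6$ is strictly weaker than \eqref{q:bddtht} for large $\kappa$, since the second term does not carry the factor $\kappa^{2\beta}$. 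To repair this you must show the iteration map preserves the spectral envelope at \emph{every} step, i.e.\ that $|\delta\tht^{(n)}_k|\le d_n\Gamma_k$ implies $|\delta\tht^{(n+1)}_k|\le c_4\,U\,d_n\Gamma_k$ with $c_4$ independent of $n$ and $k$; the geometric series then lives in the envelope constants $d_n$ and the whole tail inherits the $\kappa^{2\beta}$ dyad scaling. This is precisely the content of the paper's Lemma~\ref{t:thtn}, and it is also where $\beta<-2$ is genuinely used: your appeal to Young's inequality hides the fact that the convolution $\sum_j|k-j|^{\beta+1}|j|\Gamma_j$ must be split into regions, and the near-diagonal region $|k-j|\le\eta|k|$ contributes a term of order $M_\beta(k;\eta)\,|k|\,\Gamma_k$ with $M_\beta$ as in \eqref{q:aux2}, which is subdominant to $|k|^2\Gamma_k$ only when $\beta<-2$. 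A smaller point: the pure power-law envelope $|\vtt_l|\lesssim U|l|^{\beta-1}|\gb^{-1}g|_{L^\infty}$ fails for $1\le|l|\le2\kpg$ unless you insert a $\kpg$-dependent constant; the paper's $\Gamma_k=|k|^{-2}\min\{2\kpg,(2\kpg)^{-\beta}|k|^{\beta+1}\}$ is built to handle exactly this, and the $\kpg$-dependence then lands in $c_0(\beta,\kpg)$ as claimed.
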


That $\vtt$ has the correct BHT spectrum for large $\kappa$ follows from the fact that $\EV|\Pkk\vtt|_{L^2}^2/|\Pkk u|_{L^2}^2\propto\kappa^{-4}$.
That this happens with probability approaching 1 as $\kappa\to\infty$ follows from the fact that $\Var(|\Pkk\vtt|_{L^2}^2)^{1/2}/\EV(|\Pkk\vtt|_{L^2}^2)\propto\kappa^{-1}$.
The relative error $|\Pkk\delta\tht|_{L^2}^2/\EV|\Pkk\vtt|_{L^2}^2$ can be made vanishingly small {\em uniformly in $\kappa$\/} by taking $U|\gb^{-1}g|_{L^\infty}^2/|\gb^{-1}g|_{L^2}^2\to0$.
We note that there are no uncomputed constants in either \eqref{q:EVvtht} or \eqref{q:Varvtht}.

\begin{proof}
Equations \eqref{q:EVvtht}--\eqref{q:Varvtht} follow from Lemma~\ref{t:vfi0} with $\vtt=-\Delta^{-1}\vfi^{(1)}$.
The bound \eqref{q:bddtht} follows from Lemma~\ref{t:thtn}.
\end{proof}

\begin{lemma}\label{t:vfi0}
Let $u$ be given by \eqref{q:psi} and $g$ by \eqref{q:gkg}, with $\theta$ the solution of \eqref{q:dtht} and $\varphi^{(1)}$ as in~\eqref{q:vfi0}.
Then taking expected value and variance over i.i.d.\ random phases $\phi_k$ we have
\begin{align}
   &\EV(|\Pkk\vfi^{(1)}|_{L^2}^2) \simeq \pi\,\mathcal{G}_0(g)\frac{2^{2\beta+4}-1}{2\beta+4}\,U^2\kappa^{2\beta+4}\quad\text{and}\label{q:EVvfi}\\
   &\Var(|\Pkk\vfi^{(1)}|_{L^2}^2) \lesssim \frac{\pi\,\mathcal{G}_1(g)}{2}\frac{2^{4\beta+6}-1}{4\beta+6}\,U^4\kappa^{4\beta+6},\label{q:Varvfi}
\end{align}
where
\begin{align}
   &\mathcal{G}_0(g) = \tssum_j\,|j|^2\gamma_j^2
	= |\gb^{-1}g|_{L^2}^2 \quad\text{and}\\
   &\mathcal{G}_1(g) = \tssum_{j,k}\,(3j_x^2k_x^2 + j_x^2k_y^2 + j_y^2k_x^2 + 3j_y^2k_y^2 + 4i_xi_yj_xj_y)\,\gamma_j^2\gamma_k^2.
\end{align}
\end{lemma}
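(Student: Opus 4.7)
The plan is to compute $\vfi^{(1)}$ in Fourier modes, substitute into $|\Pkk\vfi^{(1)}|_{L^2}^2$, and exploit the orthogonality of the independent random phases $\{\phi_k\}$ and $\{\xi_k\}$. From \eqref{q:psi} and \eqref{q:gkg}, using $u_j=\im U|j|^\beta j^\perp\ex^{\im\phi_j^{}}$ and $(\gb\Delta^{-1}g)_k=\im k\gamma_k\ex^{\im\xi_k^{}}$, together with $j^\perp\ncdot(\ell-j)=j^\perp\ncdot\ell$, one obtains
\[
   \vfi^{(1)}_\ell \;=\; -U\sump{j}\,|j|^\beta\gamma_{\ell-j}^{}(j^\perp\ncdot\ell)\,\ex^{\im(\phi_j^{}+\xi_{\ell-j}^{})}.
\]

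For the mean \eqref{q:EVvfi}, I would expand $|\vfi^{(1)}_\ell|^2$ as a double sum over $(j,j')$. The combined phase factor $\ex^{\im(\phi_j-\phi_{j'})}\ex^{\im(\xi_{\ell-j}-\xi_{\ell-j'})}$ has nonzero expectation only when $j=j'$ (the off-diagonal $j'=-j$ case is killed by the independent $\xi$-factor, since $\xi_{\ell-j}$ and $\xi_{\ell+j}$ are then independent and generically unrelated), leaving
\[
   \EV|\vfi^{(1)}_\ell|^2 \;=\; U^2\sump{j}\,|j|^{2\beta}\gamma_{\ell-j}^2(j^\perp\ncdot\ell)^2.
\]
Substituting $k=\ell-j$, using $|\ell-k|\simeq|\ell|$ for $|k|<\kpg$, and invoking the radial-symmetry identity $\sump{k}\gamma_k^2(k^\perp\ncdot\ell)^2=\tfrac12|\ell|^2\mathcal{G}_0(g)$ (valid because $\gamma_k$ depends only on $|k|$, so the cross-term $\sum_k\gamma_k^2 k_xk_y$ vanishes) gives $\EV|\vfi^{(1)}_\ell|^2\simeq\tfrac12 U^2|\ell|^{2\beta+2}\mathcal{G}_0(g)$. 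Converting the shell sum to the radial integral $\int_\kappa^{2\kappa}R^{2\beta+3}\,dR$ then produces \eqref{q:EVvfi}.

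For the variance \eqref{q:Varvfi}, I would expand $|\vfi^{(1)}_\ell|^2|\vfi^{(1)}_{\ell'}|^2$ as a quadruple sum over $(a,b,c,d)$ and classify the admissible Wick-style pairings of the combined $\phi$- and $\xi$-phases. Three pairings make the $\phi$-sum vanish: (A)~$a=b$, $c=d$; (B)~$a=d$, $b=c$; (C)~$a=-c$, $b=-d$. Type~(A) reproduces $(\EV|\Pkk\vfi^{(1)}|_{L^2}^2)^2$ and cancels in the variance. In case~(B) the residual $\xi$-sum $\xi_{\ell-a}-\xi_{\ell-b}+\xi_{\ell'-b}-\xi_{\ell'-a}$ vanishes identically when $\ell'=\ell$; in case~(C), since $\xi_{-k}=-\xi_k$, the residual $\xi$-sum vanishes identically when $\ell'=-\ell$. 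The remaining admissible subcases (the cross-pairings within~(B) and~(C), plus low-dimensional overlaps with~(A)) force $|\ell\pm\ell'|\le2\kpg$ with the pair $(a,b)$ pinned to a ball of radius $\kpg$, so each is effectively a single shell-sum over a constrained domain and is absorbed into the~$\lesssim$.

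Each of the two leading subcases contributes, after substituting $p=\ell-a$, $q=\ell-b$ and using $|a|,|b|\simeq|\ell|$, exactly
\[
   U^4\sum_{\ell\in\Pkk}|\ell|^{4\beta}\sump{p,q}\,\gamma_p^2\gamma_q^2(p^\perp\ncdot\ell)^2(q^\perp\ncdot\ell)^2.
\]
The key computation is the angular integral on $|\ell|=R$ of $(p^\perp\ncdot\ell)^2(q^\perp\ncdot\ell)^2$, which by direct expansion and $\int_0^{2\pi}\cos^4\alpha\,\mathrm{d}\alpha=\tfrac{3\pi}{4}$, $\int_0^{2\pi}\cos^2\alpha\sin^2\alpha\,\mathrm{d}\alpha=\tfrac{\pi}{4}$ evaluates to $\tfrac{\pi R^4}{4}(3p_x^2q_x^2+p_x^2q_y^2+p_y^2q_x^2+3p_y^2q_y^2+4p_xp_yq_xq_y)$, which is precisely the summand of $\mathcal{G}_1(g)$. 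The subsequent radial integral $\int_\kappa^{2\kappa}R^{4\beta+5}\,\mathrm{d}R=(2^{4\beta+6}-1)/(4\beta+6)\,\kappa^{4\beta+6}$ together with the factor $2$ from combining the two leading subcases yields the constant $\pi\mathcal{G}_1/2$ in \eqref{q:Varvfi}. The main obstacle is the combinatorial bookkeeping: one must carefully enumerate the non-leading $\xi$-pairings within cases (B) and~(C), together with their overlaps with~(A), and verify that each is of strictly lower order in $\kappa$ so as to be swallowed by the stated $\lesssim$.
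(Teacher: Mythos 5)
Your proposal is correct and follows essentially the same route as the paper: expand $\vfi^{(1)}$ in Fourier modes, reduce the phase expectations to Wick-type pairings of the independent phases (respecting $\phi_{-k}=-\phi_k$, $\xi_{-k}=-\xi_k$), and replace the shell sum by an annular integral. Two remarks are worth recording. First, for the mean you evaluate the source-index sum exactly via the radial symmetry of $\gamma$ (so that $\sum_k\gamma_k^2k_xk_y=0$ and $\sum_k\gamma_k^2k_x^2=\tfrac12\mathcal{G}_0$) instead of performing the angular integral over the shell variable as the paper does; this is a harmless variant yielding the same $\pi\,\mathcal{G}_0$. Second, your variance bookkeeping is in one respect \emph{sharper} than the paper's: you exhibit both leading diagonal contributions, namely the pairing (B) at $\ell'=\ell$ and the conjugate pairing (C) at $\ell'=-\ell$, whose sum is what produces the factor $\pi\,\mathcal{G}_1/2$ in \eqref{q:Varvfi}; the paper's displayed computation only shows $\sum_k(\EV|\vfi_k|^2)^2$, which by itself gives $\pi\,\mathcal{G}_1/4$. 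The one claim you should weaken is that the residual cross-pairings are ``of strictly lower order in $\kappa$.'' They are not: they live on $O(\kappa^2\kpg^2)$ pairs $(\ell,\ell')$ with $|\ell\mp\ell'|\le2\kpg$, and each such pair contributes a single constrained sum over a $\kpg$-ball of size $O(\kappa^{4\beta+4})$, so their total is also $O(\kappa^{4\beta+6})$ --- the same power of $\kappa$ as the leading term, only with a different $g$-dependent constant of the rough form $c\,\kpg^2\sum_j|j|^4\gamma_j^4$ in place of $\mathcal{G}_1$. The paper silently drops these off-diagonal terms as well, so your argument matches its level of rigor, but they cannot be dismissed on $\kappa$-scaling grounds alone and must instead be absorbed into the $\lesssim$ as an additional contribution of the same order.
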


\begin{proof}
In Fourier space, with $j\wedge k:=j_xk_y-j_yk_x$, \eqref{q:vfi0} reads
\begin{equation}\label{q:vfi1}\begin{aligned}
  \vfi^{(1)}_k &= \tssum_j\,(j\wedge k)\,\psi_{k-j}g_j/|j|^2\\
	&= U\tssum_j\,(k\wedge j)|k-j|^{\beta}\gamma_j\ex^{\im(\phi_{k-j}+\xi_j)}.
\end{aligned}\end{equation}
For brevity, we write $\vfi:=\vfi^{(1)}$, and noting that $\vfi$ is linear in $U$, we put $U\equiv1$ throughout the proof.
Here and henceforth, unadorned norm $|\cdot|$ means $|\cdot|_{L^2(\Dom)}^{}$.
With this we compute
\begin{equation}\begin{aligned}
   \EV|\Pkk\vfi|_{L^2}^2
	&= \EV(\tssum_k\,|\vfi_k|^2)\\
	&= \tssum_k\,\EV|\vfi_k|^2
\end{aligned}\end{equation}
where all $k$ sums are understood to be over $\kappa\le|k|<2\kappa$.
We first rewrite
\begin{equation}
   \vfi_k = \sum_{j\in\Zu}(k\wedge j)\gamma_j\bigl(|k-j|^{\beta}\ex^{\im(\phi_{k-j}+\xi_j)} - |k+j|^{\beta}\ex^{\im(\phi_{k+j}-\xi_j)}\bigr).
\end{equation}
Writing $\cis\phi:=\ex^{\im\phi}$ for readability, we compute (there is no need to take $\EV$ over $\xi_k$ for this part of the computation)
\begin{equation}\label{q:aux31}\begin{aligned}
   \EV(\vfi_k\bar\vfi_k)
	&= \sum_{i,j\in\Zahl^2}\,(k\wedge i)(k\wedge j)\gamma_i\gamma_j\,|k-i|^{\beta}|k-j|^{\beta}\\ &\hbox to 99pt{}\ex^{\im(\xi_i-\xi_j)}\EV\,\cis(\phi_{k-i}-\phi_{k-j})\\
	&= \sum_{i,j\in\Zu}\,(k\wedge i)(k\wedge j)\gamma_i\gamma_j\\
	&\qquad\qquad\bigl\{\phantom{-} |k-i|^{\beta}|k-j|^{\beta}\ex^{\im(\xi_i-\xi_j)}\EV\,\cis(\phi_{k-i}-\phi_{k-j})\\
	&\qquad\qquad- |k+i|^{\beta}|k-j|^{\beta}\ex^{-\im(\xi_i+\xi_j)}\EV\,\cis(\phi_{k+i}-\phi_{k-j})\\
	&\qquad\qquad- |k-i|^{\beta}|k+j|^{\beta}\ex^{\im(\xi_i+\xi_j)}\EV\,\cis(\phi_{k-i}-\phi_{k+j})\\
	&\qquad\qquad+ |k+i|^{\beta}|k+j|^{\beta}\ex^{\im(\xi_j-\xi_i)}\EV\,\cis(\phi_{k+i}-\phi_{k+j})\bigr\}.
\end{aligned}\end{equation}
Assuming that $|k|>2\kpg$ and recalling that $\gamma_j=0$ when $|j|>\kpg$, all (relevant) wavevectors of the form $k+j$ will lie in the same half-plane in $\Zahl^2$, implying that, e.g., $\phi_{k-j}$ and $\phi_{k-i}$ are independent and thus $\EV\,\cis(\phi_{k-i}-\phi_{k-j})=0$ unless $i=j$.
This reasoning implies that the two middle terms vanish: for $\EV\,\cis(\phi_{k+i}-\phi_{k-j})$ to be non-zero, we must have $j=-i$, which is impossible since both $i$ and $j\in\Zu$; and similarly for $\EV\,\cis(\phi_{k-i}-\phi_{k+j})$.
Thus only the first and last terms contribute to the sum, and then only for $i=j$ when $\EV\,\cis(\cdots)=1$, giving us
\begin{equation}\label{q:vfi2}\begin{aligned}
   \EV|\vfi_k|^2 &= 2\sum\nolimits_{j\in\Zu} \gamma_j^2(k\wedge j)^2|k-j|^{2\beta}\\
	&= \sum\nolimits_{j\in\Zahl^2} \gamma_j^2(k\wedge j)^2|k-j|^{2\beta}
\end{aligned}\end{equation}

We now make two approximations, both of which have vanishing errors as $\kappa\to\infty$.
First, since $\kappa\gg\kpg$ by hypothesis, we approximate $|k-j|^{2\beta}\simeq|k|^{2\beta}$, bounding the error as follows.
We write
\begin{equation}
   |k-j|^2 = |k|^2\bigl|\hat k - j/|k|\bigr|^2
	= |k|^2\bigl(\bigl|\hat k - j_\parallel/|k|\bigr|^2 + |j_\bot|^2/|k|^2\bigr)
\end{equation}
where $\hat k:=k/|k|$, $j_\parallel:=(j\cdot k)\,\hat k$ and $j_\bot:=j-j_\parallel$.
Bounding from above, assuming that $|k|\ge 2\kpg\ge 2|j|$, we have
\begin{equation}
   |k-j|^2/|k|^2
	\le \bigl(1 + |j|/|k|\bigr)^2 + |j|^2/|k|^2
	\le 1 + 3|j|/|k|.
\end{equation}
While bounding from below, we find
\begin{equation}
   |k-j|^2/|k|^2
	\ge \bigl(1 - |j|/|k|\bigr)^2
	\ge 1 - 2|j|/|k|.
\end{equation}
These give us, for $|k|\ge3\kpg\ge3|j|$ and $\alpha>0$,
\begin{equation}\label{q:bdjkp}
   (1-2\kpg/|k|)^\alpha \le |k-j|^\alpha/|k|^\alpha \le (1+3\kpg/|k|)^\alpha
\end{equation}
and for $\alpha<0$
\begin{equation}\label{q:bdjk}
   (1+3\kpg/|k|)^\alpha \le |k-j|^\alpha/|k|^\alpha \le (1-2\kpg/|k|)^\alpha.
\end{equation}

For the second approximation, since $\kappa\gg1$, we can replace the sum over $\{k\in\Zahl^2:\kappa\le|k|<2\kappa\}$ by an integral over the annulus $\{(r,\varpi):\kappa\le r<2\kappa,\,0\le\varpi<2\pi\}$.
With these, we have
\begin{equation}\label{q:aux30}\begin{aligned}
   \EV(|\Pkk\vfi|_{L^2}^2)
	&= \sum_{\kappa\le|k|<2\kappa} \EV(|\vfi_k|^2)
	= \tssum_k\tssum_j\,\gamma_j^2(k\wedge j)^2|k-j|^{2\beta}\\
	&\simeq \tssum_k\tssum_j\,\gamma_j^2(k\wedge j)^2|k|^{2\beta}\\
	&\simeq \int_\kappa^{2\kappa}\!\!\int_0^{2\pi}\tssum_j\,r^2(j_x\sin\varpi-j_y\cos\varpi)^2\, r^{2\beta}\gamma_j^2 \;r\;\mathrm{d}\varpi\,\dr\\
	&= \int_\kappa^{2\kappa} \pi\tssum_j\,(j_x^2+j_y^2)\,r^{2\beta+3}\gamma_j^2 \dr,
\end{aligned}\end{equation}
which gives \eqref{q:EVvfi}.

For the variance, we compute
\begin{equation}\label{q:aux35}
   \Var\bigl(|\Pkk\vfi|_{L^2}^2\bigr)
	= \Var\bigl(\tssum_k\,|\vfi_k|^2\bigr)
	= \EV\bigl(\tssum_k\,|\vfi_k|^2\bigr)^2 - \Bigl(\EV\;\tssum_k\,|\vfi_k|^2\Bigr)^2.
\end{equation}
Having obtained the last term above, we write the first term as
\begin{equation}\label{q:aux36}
   \EV\bigl(\tssum_k\,|\vfi_k|^2\>\tssum_l\,|\vfi_l|^2\bigr)
	= \EV\;\tssum_k|\vfi_k|^4 + \EV\;\tssum_{k\ne l}|\vfi_k|^2|\vfi_l|^2.
\end{equation}
Similarly to what we did above, we compute $\EV\,\tssum_k|\vfi_k|^4 = \tssum_k\EV\,|\vfi_k|^4$ and, with $k$ fixed for now,
\begin{equation*}\begin{aligned}
   \EV\,|\vfi_k|^4 &= \EV(\vfi_k\bar\vfi_k\vfi_k\bar\vfi_k)\\
	&= \sum_{\pm;\,i,\,j,\,m,\,n\in\Zu} (\pm i\wedge k)(\pm j\wedge k)(\pm m\wedge k)(\pm n\wedge k) \gamma_i\gamma_j\gamma_m\gamma_n\\
	&\qquad|k\pm i|^\beta|k\pm j|^\beta|k\pm m|^\beta|k\pm n|^{\beta}\,\EV\,\ex^{\im(\xi_{\pm i}-\xi_{\pm j}+\xi_{\pm m}-\xi_{\pm n})}\\
	&\qquad\EV\,\cis(\phi_{k\pm i}-\phi_{k\pm j}+\phi_{k\pm m}-\phi_{k\pm n})
\end{aligned}\end{equation*}
where the $\pm$ sum is taken over the 16 combinations of $\pm i,\cdots,\pm n$.
Arguing as we did after \eqref{q:aux31}, since $|k|>2\kpg$, for $(-i,-j,-m,-n)$ we have $\EV\,\cis(\phi_{k-i}-\phi_{k-j}+\phi_{k-m}-\phi_{k-n})\ne0$ only when $i=j$, $m=n$ or $i=n$, $j=m$ (2 possible cases), and similarly for $\EV\,\cis(\phi_{k+i}-\phi_{k+j}+\phi_{k+m}-\phi_{k+n})$.
For $(-i,-j,+m,+n)$, we must have $i=j$, $m=n$ (only possible case); and similarly for $(+i,+j,-m,-n)$, $(-i,+j,+m,-n)$ and $(+i,-j,-m,+n)$.
In the remaining 12 combinations, the 8 with odd parities as well as $(+i,-j,+m,-n)$ and $(-i,+j,-m,+n)$, we have $\EV\,\cis(\cdots)=0$.
Relabelling indices as necessary, we thus find
\begin{equation*}\begin{aligned}
   \EV\,|\vfi_k|^4
	&= 2 \sum_{i,\,j\in\Zu} (k\wedge i)^2(k\wedge j)^2\gamma_i^2\gamma_j^2\\ &\qquad\qquad(|k-i|^{2\beta}|k-j|^{2\beta} +|k+i|^{2\beta}|k+j|^{2\beta} + 2|k-i|^{2\beta}|k+j|^{2\beta})\\
	&= 2 \Bigl(\sum_{j\in\Zahl^2} \gamma_j^2(k\wedge j)^2|k-j|^{2\beta}\Bigr)^2 = 2\,(\EV\,|\vfi_k|^2)^2.
\end{aligned}\end{equation*}

Next we consider, for $k\ne l$,
\begin{equation*}\begin{aligned}
   \EV\,|\vfi_k|^2|\vfi_l|^2 &= \EV(\vfi_k\bar\vfi_k\vfi_l\bar\vfi_l)\\
	&= \sum_{\pm;\,i,\,j,\,m,\,n\in\Zu} (\pm i\wedge k)(\pm j\wedge k)(\pm m\wedge l)(\pm n\wedge l) \gamma_i\gamma_j\gamma_m\gamma_n\\
	&\qquad|k\pm i|^\beta|k\pm j|^\beta|l\pm m|^\beta|l\pm n|^{\beta}\,\EV\,\ex^{\im(\xi_{\pm i}-\xi_{\pm j}+\xi_{\pm m}-\xi_{\pm n})}\\
	&\qquad\EV\,\cis(\phi_{k\pm i}-\phi_{k\pm j}+\phi_{l\pm m}-\phi_{l\pm n}).
\end{aligned}\end{equation*}
For $(+i,+j,+m,+n)$, $(-i,-j,-m,-n)$, $(-i,-j,+m,+n)$ and $(+i,+j,-m,-n)$, only, we have contributions in the ``straight'' case $i=j$, $m=n$, in which case $\ex^{\im(\xi_{\pm i}-\xi_{\pm j}+\xi_{\pm m}-\xi_{\pm n})}=1$ (valid regardless of whether $\xi$ is random).
These combinations contribute
\begin{equation}
   S_1 = \sum_{j,n\in\Zahl^2}\,(k\wedge j)^2(l\wedge n)^2\gamma_j^2\gamma_n^2|k-j|^{2\beta}|l-n|^{2\beta}.
\end{equation}
Writing $d:=k-l$, we also have contributions in 9 ``cross'' cases: e.g., for $(+i,+j,+m,+n)$ we have $i=n-d$, $m=j+d$, which (unlike the straight cases) has a phase $\EV\,\ex^{\im(\xi_{n-d}-\xi_j+\xi_{j+d}-\xi_n)}$ which is non-zero if and only if $n=j-d$.
These combinations contribute
\begin{equation}
   S_2 = \sum_{j\in\Zahl^2}\,(k\wedge j)^2(l\wedge n)^2\gamma_j^2\gamma_n^2|k-j|^{2\beta}|l-n|^{2\beta}
	\qquad\text{with }n=j-d.
\end{equation}
Since the summands in both $S_1$ and $S_2$ are non-negative, we have $S_2\le S_1$.

Putting things together, we have
\begin{equation}\begin{aligned}
   \EV\,|\vfi_k|^2|\vfi_l|^2 &\le 2\sum_{j,n\in\Zahl^2}\,(k\wedge j)^2(l\wedge n)^2\gamma_j^2\gamma_n^2|k+j|^{2\beta}|l+n|^{2\beta}\\
	&= 2\sum_{j\in\Zahl^2} \gamma_j^2(k\wedge j)^2|k-j|^{2\beta}\>\sum_{n\in\Zahl^2} \gamma_n^2(l\wedge n)^2|l-n|^{2\beta}.
\end{aligned}\end{equation}

Using this in \eqref{q:aux35}--\eqref{q:aux36}, we have
\begin{equation}\begin{aligned}
   \Var(|\Pkk\vfi|_{L^2}^2) &= \tssum_k\,\bigl(\EV\,|\vfi_k|^2\bigr)^2\\
	&\le \tssum_k\,\bigl(\tssum_j\,{(j\wedge k)^2|k-j|^{2\beta}}\gamma_j^2\bigr)^2\\
	&\lesssim \int_\kappa^{2\kappa} r^{4\beta+5} \int_0^{2\pi} \bigl(\tssum_j\,(j_x\sin\varpi - j_y\cos\varpi)^2\gamma_j^2\bigr)^2 \;\mathrm{d}\varpi\dr
\end{aligned}\end{equation}
where at the last step we have again made the two approximations that led to \eqref{q:aux30}.
Computing the $\varpi$ integral explicitly,
\begin{align*}
   \int_0^{2\pi}\bigl(\tssum_j\cdots\bigr)^2\;\mathrm{d}\varpi
	&= \tssum_{ij}\int_0^{2\pi}(i_x\sin\varpi-i_y\cos\varpi)^2(j_x\sin\varpi-j_y\cos\varpi)^2 \gamma_i^2\gamma_j^2\;\mathrm{d}\varpi\\
	&= \frac{\pi}{4}\tssum_{ij}\, (3i_x^2j_x^2 + i_x^2j_y^2 + i_y^2j_x^2 + 3i_y^2j_y^2 + 4i_xi_yj_xj_y) \gamma_i^2\gamma_j^2,
\end{align*}
which gives \eqref{q:Varvfi}.
\end{proof}

Next, we show that, for sufficiently small $U$, $\tht$ is dominated at all large scales by its leading-order approximation $\tht^{(1)}$, i.e.\ the iteration~\eqref{q:thsn} does not make $\tht^{(n)}$ much ``worse''.

\begin{lemma}\label{t:thtn}
Let $\beta<-2$ and $U$ satisfy \eqref{q:cond2} below.
Then we have for all $n\ge1$
\begin{equation}
   |\tht^{(n)}_k-\tht^{(1)}_k| \le U^{3/2}|\gb^{-1}g|_{L^\infty}^{}|k|^{-2}K_\beta(k),
\end{equation}
where $K_\beta(k)$ is defined shortly below.
\end{lemma}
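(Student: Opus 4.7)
\smallskip

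The strategy is to telescope the iteration and bound the differences geometrically. Let $\Psi^{(m)}:=\tht^{(m)}-\tht^{(m-1)}$, so that subtracting consecutive instances of \eqref{q:thsn} gives $\Psi^{(m+1)}=\Delta^{-1}(u\cdot\gb\Psi^{(m)})$ with $\Psi^{(1)}=\vtt$, and hence $\tht^{(n)}_k-\tht^{(1)}_k=\sum_{m=2}^n\Psi^{(m)}_k$. The plan is to establish a mode-wise bound of the form $|\Psi^{(m)}_k|\le A_m|k|^{-2}K_\beta(k)$ with $A_2\sim U^2|\gb^{-1}g|_{L^\infty}$ and $A_{m+1}\le cU\,A_m$; the geometric sum then gives a bound of size $U^2$, and the slack $U^2\le U^{1/2}\cdot U^{3/2}$ for small $U$ absorbs the tail factor $1/(1-cU)$ under the smallness hypothesis \eqref{q:cond2}, delivering the claimed $U^{3/2}$.

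For the base case $m=2$, I would use the convolution identity $(u\cdot\gb f)_k=-\sum_j(k\wedge j)\psi_{k-j}f_j$ (which follows from $u_\ell=\im\ell^\perp\psi_\ell$ and the incompressibility calculation $(k-j)^\perp\cdot j=k\wedge j$) together with $\vtt_j=\vfi^{(1)}_j/|j|^2$ and \eqref{q:vfi1} to express $\Psi^{(2)}_k$ as an explicit double sum in $\psi,\psi,g$ indexed by $j,\ell$. Taking absolute values, substituting $|\psi_m|=U|m|^\beta$, and using the pointwise bound $|g_\ell|/|\ell|\le|\gb^{-1}g|_{L^\infty}$ (which follows from $(\gb^{-1}g)_\ell=-\im\ell g_\ell/|\ell|^2$ and the trivial Fourier-coefficient estimate), I would factor out $U^2|\gb^{-1}g|_{L^\infty}|k|^{-2}$ and \emph{define} $K_\beta(k)$ to be the remaining deterministic $j,\ell$-sum. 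A computation in the spirit of \eqref{q:aux30} then shows $K_\beta(k)$ is of order $|k|^{\beta+1}$ for $|k|\gg\kpg$, consistent with $|\vtt_k|\sim U|k|^{\beta-1}$ as one reads off Lemma~\ref{t:vfi0}. For the induction $m\mapsto m+1$, the same Fourier identity gives $|\Psi^{(m+1)}_k|\le|k|^{-2}\sum_j|k\wedge j|\,|\psi_{k-j}|\,|\Psi^{(m)}_j|$, and inserting the inductive hypothesis with $|\psi_{k-j}|=U|k-j|^\beta$ reduces the propagation of the bound to the closure inequality
\[
\sum_{j\in\Zahl^2}|k\wedge j|\,|k-j|^\beta\,|j|^{-2}K_\beta(j)\;\le\;\mu_\beta\,K_\beta(k)
\]
for some $\mu_\beta=\mu_\beta(\beta,\kpg)$. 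Granting this one gets $A_{m+1}\le U\mu_\beta A_m$, and the geometric sum completes the argument under \eqref{q:cond2} (read as $U\mu_\beta\le\sfrac12$, say).

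The hard part is proving this closure inequality uniformly in $k$: the weight $K_\beta$ must be an approximate eigenfunction of the singular convolution kernel $(k,j)\mapsto|k\wedge j||k-j|^\beta|j|^{-2}$. Heuristically this works because the kernel applied to $|j|^{-2}K_\beta(j)\sim|j|^{\beta-1}$ produces, in the dominant regime $|j|\ll|k|$, a contribution bounded by $|k|^{\beta+1}\sum_j|j|^\beta$, which is convergent for $\beta<-2$ and of the desired size $\sim K_\beta(k)$; the regimes $|j|\sim|k|$ and $|j|\gg|k|$ give only lower-order contributions proportional to $|k|^{2\beta+3}$. Turning this heuristic into a rigorous bound uniform in $k\in\Zahl^2$, controlling both the behaviour at small $|k|$ and the lattice-versus-integral errors in the style of the Appendix referenced after \eqref{q:ustatic}, is the main technical work.
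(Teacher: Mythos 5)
Your proposal is correct and follows essentially the same route as the paper's proof: telescope the iteration, bound the first difference mode-wise by a weight of the form $|k|^{-2}K_\beta(k)$ with $K_\beta(k)=\min\{2\kpg,(2\kpg)^{-\beta}|k|^{\beta+1}\}$, propagate that bound through the convolution kernel via a closure (quasi-eigenfunction) inequality, and sum the resulting geometric series under the smallness condition \eqref{q:cond2} to absorb a factor $U^{1/2}$. The closure inequality you isolate as the main technical work is precisely the paper's \eqref{q:bdsumj0} and \eqref{q:bdsumj9}, proved there by exactly the region splitting your heuristic describes ($|j|$ away from $|k|$ versus the near-diagonal $|j-k|\le\eta|k|$, the latter contributing $O(|k|^{2\beta+3})$ and forcing $\beta<-2$), with the lattice-versus-integral errors controlled as in the Appendix.
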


\begin{proof}
We first derive a bound for $\delta\tht^{(1)}_k=\vtt_k$.
From \eqref{q:vfi1}, we have
\begin{equation}
   |\vtt_k|
	\le U|k|^{-2} \tssum_j\,|k|\,|j|\gamma_j|k-j|^\beta
\end{equation}
For $|k|>2\kpg$, we have $|k-j|>\sfrac12|k|$ and thus $|k-j|^\beta<2^{-\beta}|k|^\beta$; this gives
\begin{equation}
   |\vtt_k|
	\le 2^{-\beta}|k|^{\beta-1}U\tssum_j\,|j|\gamma_j
	= 2^{-\beta}|k|^{\beta-1}U\,|\gb^{-1}g|_{L^\infty}^{}.
\end{equation}
For small $|k|\le2\kpg$, we simply bound $|j-k|^\beta\le1$ since $|j-k|\ge1$ and obtain
\begin{equation}
   |\vtt_k|
	\le U|k|^{-1} \tssum_j\,|j|\gamma_j
	\le 2U|k|^{-1}|\gb^{-1}g|_{L^\infty}^{}.
\end{equation}
For what follows, we adopt the slightly worse bound
\begin{equation}\label{q:bdvfi1}
   |\vtt_k|
	\le |\gb^{-1}g|_{L^\infty}^{}U \Gamma_k
\end{equation}
where $\Gamma_k=\Gamma(|k|;\beta)=|k|^{-2}\min\{2\kpg,(2\kpg)^{-\beta}|k|^{\beta+1}\}$ is monotone decreasing in $|k|$ and satisfies
\begin{equation}\label{q:kbeta}
   \Gamma(sk;\beta)\le s^{\beta-1}\Gamma(k;\beta)
	\qquad\text{for all }s\in(0,1).
\end{equation}
We point out that, using this to bound $|\Pkk\vtt|_{L^2}^2$, the result will scale as $\kappa^{2\beta}$, i.e.\ it has the same scaling as the expected value \eqref{q:EVvfi} though obviously with a larger multiplier (among other things, depending on $|\gb^{-1}g|_{L^\infty}^2$ instead of $|\gb^{-1}g|_{L^2}^2$).

To complete the proof, we will need to bound sums of the form
\begin{equation}
   S = \sum\nolimits_{j\ne k}\,(k\wedge j)|k-j|^\beta\Gamma_j\,.
\end{equation}
When $|k|\le2\kpg$, we simply estimate
\begin{equation}\label{q:bdsumj0}
   \tssum_j\,(k\wedge j)|k-j|^\beta\Gamma_j
	\le 2\kpg\,\tssum_j\,|j|\Gamma_j
	\le 2\kpg/|\beta+2|.
\end{equation}
provided that $\beta<-2$.

Next we consider the case $|k|>2\kpg$.
Fixing $\eta=\sfrac1{10}$, we split the sum into four parts.
For $|j|\ge(1+\eta)|k|$, we have $|k-j|\ge\eta|k|$ and thus $|k-j|^\beta\le\eta^\beta|k|^\beta$, which gives
\begin{equation}
   (k\wedge j)|k-j|^\beta\Gamma_j \le \eta^\beta|k|^{\beta+1}\Gamma_j|j|.
\end{equation}
For $|j|\le(1-\eta)|k|$, we have $|k-j|\ge\eta|k|$ and recover the above bound.
For $(1-\eta)|k|\le|j|\le(1+\eta)|k|$ and, with $m:=j-k$, $|m|\ge\eta|k|$, we have again
\begin{equation}
   (k\wedge j)|k-j|^\beta\Gamma_j
	= (k\wedge m)|m|^\beta\Gamma_j
	\le \eta^\beta|k|^{\beta+1}\Gamma_j|j|.
\end{equation}
Finally, for $(1-\eta)|k|\le|j|\le(1+\eta)|k|$ and $|m|\le\eta|k|$, we have
\begin{equation}\begin{aligned}
   (k\wedge j)|k-j|^\beta\Gamma_j
	&= (k\wedge m)|m|^\beta\Gamma_j\\
	&\le |m|^{\beta+1}|k|\Gamma_{(1-\eta)k}
	\le |m|^{\beta+1}|k|(1-\eta)^{\beta+1}\Gamma_k.
\end{aligned}\end{equation}
We therefore find
\begin{equation}\label{q:aux0}
   \Bigl|\sum_{j\ne k} (k\wedge j)|k-j|^\beta\Gamma_j\Bigr|
	\le \eta^\beta|k|^{\beta+1}\sum_{j\ne0}\,|j|\Gamma_j
	+ (1-\eta)^{\beta+1}|k|\Gamma_{k}\sum_{|m|\le\eta|k|}\,|m|^{\beta+1}.
\end{equation}
Computing the sums, writing $\lesssim$ for $\le$ up to lattice effects, we find
\begin{equation}\begin{aligned}
   \tssum_j\,|j|\Gamma_j
	&\le 2\kpg\tssum_{|j|\le2\kpg} |j|^{-1} + \tssum_{|j|>2\kpg} (2\kpg)^{-\beta}|j|^{\beta-1}\\
	&\lesssim 2\pi(2\kpg)^2 + 2\pi \frac{2\kpg}{|\beta+1|}
\end{aligned}\end{equation}
and $\sum_{|m|\le\eta|k|} |m|^{\beta+1}\lesssim 2\pi M_\beta(k;\eta)$ where
\begin{equation}\label{q:aux2}
   M_\beta(k;\eta) = \begin{cases} (\eta|k|)^{\beta+3}/(\beta+3) &\text{for }\beta>-3,\\
		\log(\eta|k|) &\text{for }\beta=-3,\\
		1/|\beta+3| &\text{for }\beta<-3. \end{cases}
\end{equation}
Putting together \eqref{q:aux0}--\eqref{q:aux2}, we find
\begin{equation}\label{q:bdsumj9}
   \Bigl|\sum_{j\ne k} (k\wedge j)|k-j|^\beta\Gamma_j\Bigr|
	\le c_1(\beta,\kpg,\eta)|k|^{\beta+1} + c_2(\beta,\eta)M_\beta(k,\eta)|k|\,\Gamma_k.
\end{equation}
For the rhs to be subdominant to $|k|^2\Gamma_k$ for large $|k|$, we must require $\beta<-2$.

Now let $\delta\tht^{(n)}:=\tht^{(n)}-\tht^{(n-1)}$
and rewrite \eqref{q:bdvfi1} as
\begin{equation}
   |\delta\tht^{(1)}_k| \le |\gb^{-1}g|_{L^\infty}^{}U\Gamma_k.
\end{equation}
Now we have from \eqref{q:thsn}
\begin{equation}
   \delta\tht^{(n+1)} = -A^{-1}(u\cdot\gb\delta\tht^{(n)}).
\end{equation}
We thus have, for each $k$,
\begin{equation}
   \delta\tht^{(n+1)}_k = U|k|^{-2}\sum\nolimits_{j\ne k}\,(k\wedge j)|k-j|^\beta\delta\tht^{(n)}_j.
\end{equation}
Using \eqref{q:bdsumj0} and \eqref{q:bdsumj9}, we have if $|\delta\tht^{(n)}_k|\le d_n\Gamma_k$,
\begin{equation}
   |\delta\tht^{(n+1)}_k| \le U\,c_4(\beta,\kpg) d_n\Gamma_k
\end{equation}
where $c_4$ is independent of $n$.
Therefore, taking $U$ small enough so that
\begin{equation}\label{q:cond2}
   U^{1/2}c_4(\beta,\kpg) \le \sfrac12
\end{equation}
the differences are bounded as
\begin{equation}
   |\delta\tht^{(n)}_k|\le 2^{-n+1}U^{1/2}|\gb^{-1}g|_{L^\infty}^{}U\Gamma_k.
\end{equation}
We thus have
\begin{align}
   |\tht^{(n)}_k-\tht^{(1)}_k|
	&\le |\delta\tht^{(n)}_k| + |\delta\tht^{(n-1)}_k| + \cdots + |\delta\tht^{(2)}_k|\notag\\
	&\le U^{3/2}|\gb^{-1}g|_{L^\infty}^{}\Gamma_k
\end{align}
uniformly in $n$.
\end{proof}

\section{Time-dependent Velocity}\label{s:vt}

We now turn to the more interesting case where $u$ depends on time.
Analogously to \eqref{q:psi}, here we take
\begin{equation}\label{q:psit}
   u = -\dy_y\psi\boldsymbol{e}_x + \dy_x\psi\boldsymbol{e}_y =: \sgb\psi
   \quad\text{where}\quad
   \psi(x,t) = U\sump{k}\,|k|^{\beta}\ex^{\im(\phi_k^{}(t) + k\cdot x)}
\end{equation}
for a positive real constant $U$.
The phases $\phi_k(t)$ are independent stationary random processes, with the proviso that $\phi_{-k}(t)=-\phi_k(t)$, satisfying
\begin{equation}
   \cov(\ex^{\im\phi_j(s)},\ex^{\im\phi_k(r)})
	= \EV\,\ex^{\im\phi_j(s)-\im\phi_k(r)}
	= \delta_{jk}\Phi_k(s-r).
\end{equation}
We assume that the time correlation function is of the form $\Phi_k(t)=\Phi(\chi_k^{}|t|)$ with $\Phi\in C^2(\Real_+;[-1,1])$ and $\Phi(0)=1$.
The correlation timescale $\chi_k^{-1}$ is assumed not to grow too rapidly with $|k|$,
\begin{equation}\label{q:chik}
   \lim_{|k|\to\infty}\chi_k^{}|k|^{-\alpha} = 0
	\qquad\text{for all }\alpha\ge2.
\end{equation}
We also assume that $\phi_k(t)$ has sufficient smoothness in $t$ for the usual Riemann integral to be defined.
As noted in the introduction, the regime considered here is the opposite of Kraichnan's white noise velocity \cite{kraichnan:68}.

Confirming the original intuition of BHT, at least for our particular model, the correlation shape function $\Phi$ and the correlation timescale only affect the tracer spectrum for smaller $|k|$, and we recover the static case of the last section as $|k|\to\infty$ {\em independently\/} of $\Phi$ and $\chi_k^{}$:

\begin{theorem}\label{t:vt}
Let the source $g(x)$ be given by \eqref{q:gkg} and $\tht(x,t)$ be the solution of \eqref{q:dtht}.
Let the velocity $u(x,t)$ be given by \eqref{q:psit}--\eqref{q:chik} with $\beta<-2$;
assume also that $U$ is small enough that the convergence condition \eqref{q:idr} below holds.
Then we can write $\tht=-\Delta^{-1}g + \vtt + \delta\tht$, where $\vtt(x,t)$ satisfies the following probabilistic estimate over the random phases $\phi$ of the velocity $u$,
\begin{equation}\label{q:EVvttk}\begin{aligned}
   \lim_{t\to\infty}&\EV|\vtt_k(t)|^2 =
	U^2\sum\nolimits_j\,\gamma_j^2\frac{(k\wedge j)^2|k-j|^{2\beta}}{|k|^4} \times\\
	&\qquad\Bigl[ 1 + \frac{\chi_{k-j}^{}}{|k|^2}\Phi'(0) + \cdots + \frac{\chi_{k-j}^{n}}{|k|^{2n}}\int_0^\infty \ex^{-s|k|^2/\chi_{k-j}^{}}\Phi^{(n)}(s) \ds\Bigr].
\end{aligned}\end{equation}
When $\kappa^2/\chi_k^{}\gg1$, we recover the static spectrum
\begin{align}
   &\lim_{t\to\infty}\EV(|\Pkk\vtt(t)|_{L^2}^2) \simeq \pi\,\mathcal{G}_0(g)\frac{4^\beta-1}{2\beta}\,U^2\kappa^{2\beta}\quad\text{and}\label{q:EVvtt}\\
   &\lim_{t\to\infty}\Var(|\Pkk\vtt(t)|_{L^2}^2) \lesssim \frac{\pi\,\mathcal{G}_1(g)}{2}\frac{4^{2\beta-1}-1}{4\beta-2}\,U^4\kappa^{4\beta-2},\label{q:Varvtt}
\end{align}
with $\mathcal{G}_0$ and $\mathcal{G}_1$ as in Lemma~\ref{t:vfi0}.
The remainder $\delta\tht(x,t)$ is bounded for all $t\ge0$ as
\begin{equation}\label{q:bddthtt}
   |\Pkk\delta\tht(t)|_{L^2}^2 \le c_0(\beta,\kpg)|\gb^{-1}g|_{L^\infty}^2U^3\kappa^{2\beta}.
\end{equation}
\end{theorem}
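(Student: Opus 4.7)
The plan is to parallel the static proof of Theorem~\ref{t:static}, with the heat semigroup $\ex^{s\Delta}$ replacing the elliptic operator $-\Delta^{-1}$.  First I would write $\tht = -\Delta^{-1}g + \tilde\tht$, so that
\begin{equation*}
   (\dy_t-\Delta)\tilde\tht + u\cdot\gb\tilde\tht = u\cdot\gb(\Delta^{-1}g),
\end{equation*}
and run the Picard iteration $\tilde\tht^{(0)} = 0$, $(\dy_t-\Delta)\tilde\tht^{(n+1)} = u\cdot\gb(\Delta^{-1}g) - u\cdot\gb\tilde\tht^{(n)}$.  Its convergence in a uniform-in-$t$ Fourier norm (the content of the referenced condition \eqref{q:idr}) is to be established by an estimate mirroring Lemma~\ref{t:thtn}, the elliptic factor $|k|^{-2}$ being replaced by $\int_0^t\ex^{-s|k|^2}\ds\le|k|^{-2}$ from the heat kernel, available uniformly in $t\ge0$.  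Define $\vtt := \tilde\tht^{(1)}$; in Fourier,
\begin{equation*}
   \vtt_k(t) = \int_0^t \ex^{-(t-\tau)|k|^2}\,\vfi^{(1)}_k(\tau)\dtau,
\end{equation*}
with $\vfi^{(1)}_k(\tau)$ the time-dependent analogue of \eqref{q:vfi1} (replace $\phi_{k-j}$ by $\phi_{k-j}(\tau)$).

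Next, compute $\lim_{t\to\infty}\EV|\vtt_k(t)|^2$.  Squaring and using the prescribed covariance $\EV\cis(\phi_{k-j}(\tau)-\phi_{k-j}(\tau'))=\Phi(\chi_{k-j}|\tau-\tau'|)$, the half-plane independence argument of Lemma~\ref{t:vfi0} still eliminates every $i\ne j$ cross term (it depends only on the instantaneous joint law of the $\phi_k$, which is unchanged), leaving
\begin{equation*}
   \lim_{t\to\infty}\EV|\vtt_k(t)|^2
   = U^2\tssum_j\gamma_j^2(k\wedge j)^2|k-j|^{2\beta}\int_0^\infty\!\!\int_0^\infty \ex^{-(s+\sigma)|k|^2}\Phi(\chi_{k-j}|s-\sigma|)\ds\dsig.
\end{equation*}
The change of variables $(r,q)=(s+\sigma,|s-\sigma|)$ collapses the double integral to $|k|^{-2}\int_0^\infty \ex^{-q|k|^2}\Phi(\chi_{k-j}q)\,\mathrm{d}q$, and iterated integration by parts, combined with the rescaling $s=\chi_{k-j}q$, produces the finite Taylor-style expansion in $\chi_{k-j}^{}/|k|^2$ with remainder integral in $\Phi^{(n)}$ displayed in~\eqref{q:EVvttk}.

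In the regime $\kappa^2/\chi_k^{}\gg1$ guaranteed by~\eqref{q:chik}, every non-leading term inside the bracket of~\eqref{q:EVvttk} is $O(\chi_k^{}/|k|^2)\to0$, so $\lim_{t\to\infty}\EV|\vtt_k(t)|^2\simeq U^2\tssum_j\gamma_j^2(k\wedge j)^2|k-j|^{2\beta}/|k|^4$, which is $|k|^{-4}$ times the static $\EV|\vfi^{(1)}_k|^2$ from Lemma~\ref{t:vfi0}.  Summing over the annulus $\kappa\le|k|<2\kappa$ and reusing the $|k-j|\simeq|k|$ and sum-to-integral approximations~\eqref{q:bdjk}--\eqref{q:aux30} then gives~\eqref{q:EVvtt}.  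For the variance~\eqref{q:Varvtt} I would reprise the $\EV|\vfi_k|^4$ and $\EV|\vfi_k|^2|\vfi_l|^2$ bookkeeping from Lemma~\ref{t:vfi0}: each instantaneous phase expectation now carries a $\Phi$-factor in the corresponding time lag, but since $|\Phi|\le1$ the quadruple time integral is dominated in absolute value by the product of the two second-moment integrals, preserving the $\kappa^{4\beta-2}$ scaling.

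Finally, for $\delta\tht=\tilde\tht-\vtt$, the Picard-difference bound of Lemma~\ref{t:thtn} transfers \emph{mutatis mutandis}: the heat-kernel estimate $\bigl|\int_0^t\ex^{-(t-\tau)|k|^2}f_k(\tau)\dtau\bigr|\le|k|^{-2}\sup_\tau|f_k(\tau)|$ plays the role of the elliptic $|k|^{-2}$ factor, so the $\Gamma_k$ envelope propagates through the successive iterates with a geometric smallness factor coming from~\eqref{q:idr}, uniformly in $t\ge0$, yielding~\eqref{q:bddthtt}.  The main obstacle I anticipate is the time-integral bookkeeping: verifying that the half-plane independence argument still kills all off-diagonal contributions once the phases are time-dependent, and then executing the double-to-single integral collapse and the iterated integration by parts cleanly enough to reproduce the explicit series in~\eqref{q:EVvttk}.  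A secondary issue is arranging the Picard estimate so that convergence of the iteration and all the $t\to\infty$ limits hold simultaneously and uniformly in $t$.
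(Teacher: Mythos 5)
Your proposal follows essentially the same route as the paper: the same decomposition $\tht=-\Delta^{-1}g+\vtt+\delta\tht$ with $\vtt_k(t)=\int_0^t\ex^{-(t-\tau)|k|^2}\vfi^{(1)}_k(\tau)\dtau$, the same half-plane independence argument to kill off-diagonal terms, reduction of the double time integral to $|k|^{-2}\int_0^\infty\ex^{-q|k|^2}\Phi(\chi_{k-j}q)\,\mathrm{d}q$ followed by repeated integration by parts for \eqref{q:EVvttk}, and the observation that the heat-kernel factor is bounded by the elliptic $|k|^{-2}$ so that Lemma~\ref{t:thtn} carries over for $\delta\tht$. The only (immaterial) differences are your direct $(s+\sigma,|s-\sigma|)$ change of variables in place of the paper's rotated coordinates with the auxiliary $\iPhi$, and phrasing the contraction in a Fourier norm rather than the paper's $L^2$ energy estimate.
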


\noindent In \eqref{q:EVvtt}--\eqref{q:Varvtt}, the $\simeq$ is to be regarded as up to remainders of order $\chi_k^{}/\kappa^2$ as well as $\kpg/\kappa$ and lattice effects.

We note that the ``static'' behaviour in \eqref{q:EVvtt}--\eqref{q:Varvtt} can be regarded as arising from the first term (the 1) in the bracket in \eqref{q:EVvttk}.
For some forms of $\chi_{k-j}^{}$ and sufficiently smooth $\Phi$, the asymptotic expression in \eqref{q:EVvttk} can be summed to give expressions analogous to \eqref{q:EVvtt} that may be regarded as higher-order corrections to the BHT spectrum.
For example, with $\chi_{k-j}^{}=\chi\,|k-j|^\eta$ for $\eta\ge0$, we have
\begin{equation}\label{q:bhtcorr}\begin{aligned}
   \lim_{t\to\infty}\EV|\Pkk\vtt(t)|_{L^2}^2
	&\simeq \pi\,\mathcal{G}_0(g) U^2\kappa^{2\beta}
	\Bigl[ \frac{4^\beta-1}{2\beta} + \frac{2^{2\beta+\eta-2}-1}{2\beta+\eta-2} \Phi'(0)\chi\kappa^{\eta-2}\\ &\qquad+ \cdots + \frac{2^{2\beta+n(\eta-2)}-1}{2\beta + n(\eta-2)}\Phi^{(n)}(0)\chi^{n}\kappa^{(\eta-2)n} + \cdots \Bigr].
\end{aligned}\end{equation}
As is usually the case with asymptotic series, this expansion is not convergent and must be truncated at some ($\kappa$-dependent) order for optimal accuracy.
We note that the later terms of this asymptotic expansion may be dominated by our bound for the lattice effects;
the latter may be sub-optimal (even though \eqref{q:bdjkp}--\eqref{q:bdjk} are qualitatively sharp, their uses are not), however, in which case some terms of the asymptotic correction \eqref{q:bhtcorr} may manifest themselves.


\begin{proof}
As in the static case, we write the solution $\tht(x,t)$ of \eqref{q:dtht} as the limit of iterates $\tht^{(n)}(x,t)$ defined by
\begin{align}
   &\tht^{(0)} = -\Delta^{-1}g,\label{q:it0}\\
   &\tht^{(n+1)}(\cdot,t) = -\Delta^{-1}g - \int_0^t \ex^{(t-s)\Delta}[u(\cdot,s)\cdot\gb\tht^{(n)}(\cdot,s)] \ds.\label{q:itn}
\end{align}
Here $\ex^{-t\Delta}$ is the heat kernel, i.e.\ $\tht^{(n+1)}$ is the solution of
\begin{equation}
   (\dy_t - \Delta)\tht^{(n+1)} = g - u\cdot\gb\tht^{(n)}
   \quad\text{with}\quad \tht^{(n+1)}(\cdot,0) = -\Delta^{-1}g.
\end{equation}

Considering \eqref{q:itn} as a mapping $T:\tht^{(n)}\mapsto\tht^{(n+1)}$, convergence of the iterations \eqref{q:it0}--\eqref{q:itn} would follow from the contractivity of $T$.
To prove the latter, we write $\delta\tht^{(n)}:=\tht^{(n)}-\tht^{(n-1)}$ and observe that it satisfies
\begin{equation}
   (\dy_t - \Delta)\delta\tht^{(n)} = -u\cdot\gb\delta\tht^{(n-1)}
   \quad\text{with}\quad \delta\tht^{(n)}(\cdot,0) = 0.
\end{equation}
Multiplying this by $\delta\tht^{(n)}$ in $L^2(\Dom)$, we find
\begin{equation}\begin{aligned}
   \frac12\ddt{\;}|\delta\tht^{(n)}|^2 + |\gb\delta\tht^{(n)}|^2
	&= -(\gb\cdot(u\,\delta\tht^{(n-1)}),\delta\tht^{(n)})\\
	&\le \frac12\,|\gb\delta\tht^{(n)}|^2 + c\,|u|_{L^\infty}^2|\delta\tht^{(n-1)}|^2.
\end{aligned}\end{equation}
Integrating in time, we find
\begin{equation}\begin{aligned}
   |\delta\tht^{(n)}(t)|^2 &+ \int_0^t |\gb\delta\tht^{(n)}(s)|^2 \ds
	\le c_1\,|u|_{L^\infty([0,t],L^\infty(\Dom))}^2 \int_0^t |\delta\tht^{(n-1)}(s)|^2 \ds\\
	&\le c_1\,|u|_{L^\infty([0,t],L^\infty(\Dom))}^2 \int_0^t |\gb\delta\tht^{(n-1)}(s)|^2 \ds,
\end{aligned}\end{equation}
so convergence of $\tht^{(n)}$ in $L^2([0,t],H^1(\Dom))$ would follow from
\begin{equation}\label{q:idr}
   \alpha:=c_1\,|u|_{L^\infty([0,t],L^\infty(\Dom))}^2 < 1.
\end{equation}

We now turn our attention to  $\vtt$, given by
\begin{equation}
   \vtt(t) = \theta^{(1)}(t) + \Delta^{-1}g
	= \int_0^t \ex^{(t-s)\Delta}u(s)\cdot\gb\Delta^{-1}g \ds,
\end{equation}
and whose Fourier coefficients satisfy
\begin{equation}
   \vtt_k(t) = \sum\nolimits_j\,\gamma_j\ex^{\im\xi_j}(k\wedge j)|k-j|^\beta \int_0^t \ex^{(s-t)|k|^2+\im\phi_{k-j}(s)} \ds.
\end{equation}
To keep the presentation manageable, we shall ignore the fact that $\phi_{-k}=-\phi_k$ in what follows, treating the velocity and source as complex-valued.
The real-valued case can be done following the computation in the previous section, giving identical result as obtained below.
We compute
\begin{align}
   \EV\,\vtt_k(t)\overline{\vtt_k(t)}
	&= \sum\nolimits_{ij} \gamma_j\gamma_i\ex^{\im\xi_j-\im\xi_i}(k\wedge j)(k\wedge i)|k-j|^\beta|k-i|^\beta\notag\\
	&\qquad\qquad \EV\Bigl\{ \int_0^t \ex^{(s-t)|k|^2+\im\phi_{k-j}(s)} \ds \int_0^t \ex^{(r-t)|k|^2-\im\phi_{k-i}(r)} \dr\Bigr\}\notag\\
	&= \sum\nolimits_{ij} (\cdots) \int_0^t\int_0^t \ex^{(s+r-2t)|k|^2}\EV\,\ex^{\im\phi_{k-j}(s)-\im\phi_{k-i}(r)} \ds\dr\notag\\
	&= \sum\nolimits_j \gamma_j^2(k\wedge j)^2|k-j|^{2\beta}\int_0^t\int_0^t \ex^{(s+r-2t)|k|^2}\Phi_{k-j}(s-r) \ds\dr.\label{q:Evttk}
\end{align}
As a check, putting $\Phi_{k}\equiv1$ gives us the static solution as $t\to\infty$:
\begin{equation}\label{q:intstat}
   \int_0^t\int_0^t \ex^{(s+r-2t)|k|^2} \ds\dr
	= \frac{\bigl(1-\ex^{-t|k|^2}\bigr)^2}{|k|^4}
	\to |k|^{-4} \qquad\text{as }t\to\infty.
\end{equation}

For convenience, we define
\begin{equation}
   \iPhi_k(t) := 2 \int_{-t}^t \Phi_k(2s) \ds;
\end{equation}
with $\Phi_k(s)=\Phi(\chi_k^{}|s|)$ this gives
\begin{equation}\label{q:iphik}
   \iPhi_k(t) = 2\chi_k^{-1}\int_{0}^{\chi_k^{}t} \Phi(2s) \ds
	=: \chi_k^{-1} \iPhi(\chi_k^{}t).
\end{equation}
We now rewrite the integral in \eqref{q:Evttk} in terms of $\tau=\frac12(s+r)$ and $\sigma=\frac12(s-r)$,
\begin{align}
   \int_0^t\int_0^t &\ex^{(s+r-2t)|k|^2}\Phi_{k-j}(s-r) \ds\dr\label{q:intst}\\
	&= 2\int_0^{t/2} \ex^{2|k|^2(\tau-t)} \int_{-\tau}^\tau \Phi_{k-j}(2\sigma) \dsig\dtau
	+ 2\int_{t/2}^t \ex^{2|k|^2(\tau-t)} \int_{\tau-t}^{t-\tau} \Phi_{k-j}(2\sigma) \dsig\dtau\notag\\
	&= 2\int_0^{t/2} \ex^{2|k|^2(\tau-t)} \iPhi_{k-j}(\tau) \dtau + 2\int_{t/2}^t \ex^{2|k|^2(\tau-t)} \iPhi_{k-j}(t-\tau) \dtau.\notag
\end{align}
As $t\to\infty$, the first integral will be vanishingly small, viz.,
\begin{equation}\begin{aligned}
   \Bigl|\int_0^{t/2} \ex^{2|k|^2(\tau-t)} \iPhi_{k-j}(\tau)\dtau\Bigr|
	&\le \int_0^{t/2} 4\tau\,\ex^{2|k|^2(\tau-t)}\dtau\\
	&= |k|^{-4}\ex^{-|k|^2t}\bigl(|k|^2t - 1 + \ex^{-|k|^2t}\bigr).
\end{aligned}\end{equation}
For any $\eps>0$, the bound on the rhs will be ${}\le \eps|k|^{-4}$ for all sufficiently large $t$.
The contribution to the integral in \eqref{q:intst} comes almost entirely from the second integral, and then only when $t-\tau$ is small.

We write the second integral in \eqref{q:intst} as
\begin{align}
   2\int_{t/2}^t \ex^{2|k|^2(\tau-t)} &\iPhi_{k-j}(t-\tau) \dtau
	=  2\int_0^{t/2} \ex^{-2|k|^2\tau} \iPhi_{k-j}(\tau) \dtau & &\notag\\
	&= \frac2{\chi_{k-j}^{}} \int_0^{t/2} \ex^{-2|k|^2\tau} \iPhi(\chi_{k-j}^{}\tau) \dtau & &\text{by \eqref{q:iphik}}\notag\\
	&\to \frac2{\chi_{k-j}^{}} \int_0^\infty \ex^{-2|k|^2\tau} \iPhi(\chi_{k-j}^{}\tau) \dtau & &\text{as }t\to\infty\notag\\
	&= \frac2{\chi_{k-j}^2} \int_0^\infty \ex^{-2\tau'|k|^2/\chi_{k-j}^{}} \iPhi(\tau') \dtau' & & \notag\\
	&= \frac1{\chi_{k-j}^{}|k|^2} \int_0^\infty \ex^{-\tau|k|^2/\chi_{k-j}^{}} \Phi(\tau) \dtau & &
\end{align}
where for the last equality we have integrated by parts using the fact that $\iPhi(0)=0$ and $\ex^{\tau\cdots}\iPhi(\tau)\to0$ as $\tau\to\infty$, and changed variables again to remove a factor of~2.
To obtain the large $|k|$ behaviour, we integrate by parts again using the fact that $\Phi(0)=1$,
\begin{equation}\label{q:aux37}\begin{aligned}
   \frac1{\chi_{k-j}^{}|k|^2} \int_0^\infty &\ex^{-\tau|k|^2/\chi_{k-j}^{}} \Phi(\tau) \dtau\\
	&= \frac1{|k|^4} + \frac1{|k|^4} \int_0^\infty \ex^{-\tau|k|^2/\chi_{k-j}^{}} \Phi'(\tau) \dtau\\
	&= \frac1{|k|^4} + \frac{\chi_{k-j}^{}}{|k|^6}\Phi'(0) + \frac{\chi_{k-j}}{|k|^6} \int_0^\infty \ex^{-\tau|k|^2/\chi_{k-j}^{}} \Phi''(\tau) \dtau.
\end{aligned}\end{equation}
Since $\Phi\in C^2$, the first term dominates the others
in the limit $|k|^2/\chi_{k-j}^{}\to\infty$, the latter being implied (since $|j|\le\kpg$) by the hypothesis $\lim_{|k|\to\infty}\chi_k/|k|^\alpha=0$ for every $\alpha\ge2$.
We thus have for large $|k|$
\begin{equation}
   \lim_{t\to\infty}\EV\,\vtt_k(t)\overline{\vtt_k(t)}
	\simeq |k|^{-4}\sum\nolimits_j\gamma_j^2(k\wedge j)^2|k-j|^{2\beta},
\end{equation}
which is precisely its static value \eqref{q:vfi2}.
Following the proof of Lemma~\ref{t:vfi0} from \eqref{q:vfi2} to \eqref{q:aux30}, we obtain \eqref{q:EVvtt}.
Repeated integration by parts of \eqref{q:aux37}, assuming sufficiently smooth $\Phi$, keeping in mind that odd derivatives of $\Phi$ all vanish, and putting the resulting expression in \eqref{q:Evttk} give \eqref{q:EVvttk}.

The computation for the variance \eqref{q:Varvtt} goes along similar lines.
We have
\begin{equation}
   \Var\bigl(\tssum_k\,|\vtt_k|^2\bigr)
	= \EV\,\bigl(\tssum_k\,|\vtt_k|^2\bigr)^2 - \bigl(\EV\,\tssum_k|\vtt_k|^2\bigr)^2
\end{equation}
with
\begin{equation}
   \EV\,\bigl(\tssum_k\,|\vtt_k|^2\bigr)^2
	= \tssum_k\EV\,|\vtt_k|^4 + \tssum_{k\ne l}\,\EV\,(|\vtt_k|^2|\vtt_l|^2).
\end{equation}
Each term in the first sum is of the form (with no need to average over $\xi$)
\begin{align*}
   &\EV\,(\vtt_k\bar\vtt_k\vtt_k\bar\vtt_k)
	= \sum_{ijmn} (k\wedge i)(k\wedge j)(k\wedge m)(k\wedge n)\gamma_i\gamma_j\gamma_m\gamma_n\\
	&\qquad |k-i|^\beta|k-j|^\beta|k-m|^\beta|k-n|^\beta\ex^{\im(\xi_i-\xi_j+\xi_m-\xi_n)}\\
	&\qquad \int_0^t\int_0^t\int_0^t\int_0^t \ex^{(s_i+s_j+s_m+s_n-4t)|k|^2} \EV\,\cis(\phi_{k-i}(s_i)-\phi_{k-j}(s_j)+\phi_{k-m}(s_m)-\phi_{k-n}(s_n)) \ds_i \ds_j \ds_m \ds_n
\end{align*}
Reasoning as we did in the proof of Lemma~\ref{t:vfi0}, the integral is non-zero only in the following two cases: $i=j$ and $m=n$, and $i=n$ and $j=m$.
Denoting the integral as $I_{var}$, it factorises into an identical pair, giving us
\begin{equation}
   I_{var} = \Bigl(\int_0^t\int_0^t \ex^{(s_i+s_j-2t)|k|^2}\Phi_{k-j}(s_i-s_j) \ds_i\ds_j\Bigr)^2.
\end{equation}
This is exactly (square of) the integral in \eqref{q:aux30}, giving us
\begin{equation}
   I_{var} = |k|^{-8}\bigl(1 + {\sf o}(\chi_{k-j}^{}/|k|^2)\bigr).
\end{equation}
As with the expected value, $\EV\,|\vtt_k|^4$ reduces exactly to the static case in the large $|k|$ limit.
The same holds with the computation for $\EV\,|\vtt_k|^2|\vtt_l|^2$ for $k\ne l$, which is very similar (having to take $\EV$ over $\xi$) and not presented here.
Summing over $k$ as in the static case gives us \eqref{q:Varvtt}.

Finally we turn to the remainder $\delta\tht$.
Since $|\Phi_k(s)|\le1$, time-dependent velocity can only ``weaken'' the tracer $\tht$ compared to the static case (where $\Phi_k(s)\equiv1$), not strengthen it.
The proof of Lemma~\ref{t:thtn} holds line-by-line for the present time-dependent case, giving us \eqref{q:bddthtt}.
\end{proof}


\section*{Appendix: Error Bound on Lattice Effects}

We derive an error bound for the second approximation in \eqref{q:aux30} and later in the proofs of Lemma~\ref{t:vfi0} where a sum over $k$ is replaced by an integral.
Writing $k=|k|(\cos\alpha_k,\sin\alpha_k)$ and $j=|j|(\cos\alpha_j,\sin\alpha_j)$, we have
\begin{equation}\label{q:aux50}
   \sum_{\kappa\le|k|<2\kappa} (k\wedge j)^2|k|^{2\beta}
	= \sum_{\kappa\le|k|<2\kappa} |j|^2|k|^{2\beta+2}\sin^2(\alpha_k-\alpha_j)
\end{equation}
where $j$ (and thus $\alpha_j$) are henceforth fixed.
Now let
\begin{equation}
   f(x,y)=(x^2+y^2)^{\beta+1}\sin^2(\tan^{-1}(y/x)-\alpha_j).
\end{equation}
Taylor's theorem gives
\begin{equation}
   f(k_x+x,k_y+y) = f(k_x,k_y) + x\,\dy_xf(k_x+\xi_x,k_y+\xi_y) + y\,\dy_yf(k_x+\xi_x,k_y+\xi_y)
\end{equation}
with $|\xi_x|\le|x|<1$ and $|\xi_y|\le|y|<1$.
We compute
\begin{equation*}
   \dy_xf(x,y) = (x^2+y^2)^\beta[2(\beta+1)x\sin^2(\tan^{-1}(y/x)-\alpha_j) - y\sin(2\tan^{-1}(y/x)-2\alpha_j)]
\end{equation*}
and an analogous expression for $\dy_yf$.
Bounding these gives us
\begin{equation}
   |\gb f(k_x+\xi_x,k_y+\xi_y)| \le (1 + |2\beta-1|)|k|^{2\beta+1}
\end{equation}
for all $|\xi_x|$, $|\xi_y|\le1$.
Integrating over a unit cell gives
\begin{equation}
   \int_{k_x}^{k_x+1}\!\!\int_{k_y}^{k_y+1} |f(k_x+x,k_y+y) - f(k_x,k_y)| \;\mathrm{d}x\;\mathrm{d}y \le (2 + |4\beta-2|)|k|^{2\beta+1}.
\end{equation}
Summing this over $\sim3\pi\kappa^2$ cells for which $\kappa\le|k|<2\kappa$ gives us the bound $6\pi(1+|2\beta-1|)\kappa^{2\beta+3}$.
To this one must add the error arising from the fact that the annulus and the cells do not coincide, which is also $\lesssim c_\beta\kappa^{2\beta+3}$.
We thus have
\begin{equation}
   \Bigl|\sum_{\kappa\le|k|<2\kappa} (k\wedge j)^2|k|^{2\beta}
	- \int_\kappa^{2\kappa}\!\!\int_0^{2\pi} (j_x\sin\varpi - j_y\cos\varpi)^2 r^{2\beta+3} \;\mathrm{d}\varpi\dr\Bigr|
\	\le c(\beta)|j|^2\kappa^{2\beta+1}.
\end{equation}


\bigskip\hbox to\hsize{\qquad\hrulefill\qquad}\medskip

\bigskip\hbox to\hsize{\qquad\hrulefill\qquad}\medskip

\end{document}